\documentclass[12pt,twoside]{article}
\usepackage{amsmath, amsthm, graphicx}
\usepackage{amsfonts}
\usepackage[english]{babel}
\usepackage{float}

\setlength{\textwidth}{16.95cm} \setlength{\textheight}{23cm}
\setlength{\topmargin}{0mm} \setlength{\evensidemargin}{7mm}
\setlength{\oddsidemargin}{7mm}
\newtheorem{theorem}{Theorem}[section]

\newtheorem{lemma}[theorem]{Lemma}

\pagestyle{myheadings}
 \markboth{{\footnotesize\sc Maps preserving the $\varepsilon$-pseudo spectrum}}
 {{\footnotesize\sc H. Bagherinejad, A. Iloon Kashkooly, R. Parvinianzadeh}}

\date{}

\begin{document}

\title{{\Large\bf Maps preserving the $\varepsilon$-pseudo spectrum of some product of operators}}

\author{{\normalsize\sc H. Bagherinejad$^{1}$, A. Iloon Kashkooly$^{1}$, R. Parvinianzadeh$^{1,}$\footnote{ Corresponding author: }}\\[0.5cm]
 $^{1}$Department of Mathematics, University of Yasouj,
 Yasouj, Iran\\
 \footnote{E-mail addresses: bagheri1361h@gmail.com (H. Bagherinejad), kashkooly@yu.ac.ir (A. Iloon Kashkooly), r.parvinian@yu.ac.ir (R. Parvinianzadeh).}
 }
\maketitle

{\footnotesize  {\bf Abstract} Let $B(H)$ be the algebra of all bounded linear operators on infinite-dimensional complex Hilbert space $H$. For $T, S \in B(H)$ denote
by $T\bullet S=TS+ST^{\ast}$ and $[T\circ S]_{\ast}=TS-ST^{\ast}$ the Jordan $\ast$-product and the skew Lie product of $T$ and $S$, respectively. Fix $\varepsilon > 0$ and $T \in B(H)$, let $\sigma_{\varepsilon}(T)$ denote the $\varepsilon$-pseudo spectrum of $T$. In this paper, we describe bijective maps $\varphi$ on $B(H)$ which satisfy
\begin{align*}
\sigma_{\varepsilon}([T_{1}\bullet T_{2},T_{3}]_{\ast})=\sigma_{\varepsilon}([\varphi(T_{1})\bullet \varphi(T_{2}),\varphi(T_{3})]_{\ast}),
\end{align*}
for all $T_{1}, T_{2}, T_{3} \in B(H)$. We also characterize bijective maps $\varphi: B(H) \rightarrow B(H)$ that satisfy
\begin{align*}
\sigma_{\varepsilon}(T_{1}\diamond T_{2}\circ_{\ast} T_{3})=\sigma_{\varepsilon}(\varphi(T_{1})\diamond \varphi(T_{2})\circ_{\ast} \varphi(T_{3})),
\end{align*}
for all $T_{1}, T_{2}, T_{3} \in B(H)$, where $T_{1}\diamond T_{2}=T_{1}T_{2}^{\ast}+T_{2}^{\ast}T_{1}$ and $T_{1}\circ_{\ast} T_{2}=T_{1}T_{2}^{\ast}-T_{2}T_{1}$.\\

\noindent {\bf Mathematics Subject Classification:} Primary 47B49: Secondary 47A10, 47B48.\\

\noindent {\bf Keywords}: Pseudo spectrum, Preserver problems, skew Lie product, Spectrum.}

\section{\normalsize\bf Introduction and Background}

Preserver problems, in the most general setting, demands
the characterization of maps between algebras that leave a
certain property, a particular relation, or even a subset
invariant. This subject is very old and goes back well over a century to the so-called first linear preserver problem, due to Frobenius \cite{fro}, who characterized linear maps that preserve the determinant of matrices. The aforesaid Frobenius’ work was generalized by J. Dieudonné \cite{die}, who characterized linear maps preserving singular matrices. The goal is to describe the general form of maps between two Banach algebras that preserve a certain property, or a certain class of elements,
or a certain relation. One of the most famous problems in this direction is Kaplansky’s problem
\cite{kap} asking whether every surjective unital invertibility preserving linear map between two semi-simple
Banach algebras is a Jordan homomorphism.
His question was motivated by two classical results, the result of Marcus and Moyls \cite{mar} on linear maps preserving eigenvalues of matrices and the Gleason-Kahane-Zelazko theorem \cite{gli,kah} stating that that Every unital invertibility preserving linear functional on a unital complex Banach algebra is necessarily multiplicative. This result was obtained independently by Gleason in \cite{gli} and Kahane-Zelazko in \cite{kah}, and was refined by Zelazko in \cite{zel}. In the non-commutative case,
the best known results so far are due to Aupetit \cite{Aup} and Sourour \cite{Sourour}. For other preserver problems on different types of products on matrices and operators, one may see \cite{Bourhim2,jaf,jaf2,li,parvin} and their references.\\

Throughout this paper, let $\varepsilon$ be a fixed positive real number, and $B(H)$ stands for the algebra of all bounded linear operators
acting on an infinite dimensional complex Hilbert space $H$ and its unit will be denoted by $I$. Let $ B_{s}(H)$, $B_{a}(H)$ and $P(H)$ be the set of all self-adjoint, the set of all anti-self-adjoint and the set of
all projections operators in $B(H)$, respectively. The inner
product of $H$ will be denoted by $\left \langle , \right \rangle$ and we write $Z(B(H))$ for the center of $B(H)$. For an operator $T \in B(H)$, the spectrum, the adjoint and the transpose of $T$ relative to an arbitrary but fixed orthogonal basis of $H$ are denoted by $\sigma(T)$, $T^{\ast}$ and $T^{t}$, respectively. Let $Tr T$ denote the trace of a finite rank operator $T$.
For a fixed positive real number $\varepsilon >0$, the $\varepsilon$-pseudo spectrum of $T$, $\sigma_{\varepsilon}(T)$, is defined by
$$\sigma_{\varepsilon}(T)=\cup \{\sigma (T+A): A\in B(H), \|A\|\leq\varepsilon\}$$
and coincides with the set
$$\{\lambda \in \mathbb{C}: \|(\lambda I- T)^{-1}\|\geq \varepsilon^{-1}\}$$
with the convention that $\|(\lambda I- T)^{-1}\|=\infty$ if $\lambda \in \sigma(T)$. It follows from the
upper-semi continuity of the spectrum that the intersection of all the pseudo spectra is the spectrum,
$$\sigma(A)=\bigcap_{\varepsilon > 0} \sigma_{\varepsilon}(A).$$
For more information about these notions, one can see \cite{tre}.

The study of linear and nonlinear Pseudo spectra preserver problems attracted the attention of a number of authors. Mainly, several authors described maps on matrices or
operators that preserve the $\varepsilon$-pseudo spectral radius and the $\varepsilon$-pseudo spectrum of different kinds of products; see for instance \cite{abdel,Ben1,Ben2,Ben3,cu2,cu3,kum} and the references therein. The aim of this paper is to characterize mappings on $B(H)$ that preserve the $\varepsilon$-pseudo spectrum of different kinds of mixed product of operators.\\

In the following lemma, we collect some known properties of the $\varepsilon$-pseudo spectrum which are needed in the proof of the
main results. For any $z \in \mathbb{C}$ and $r> 0$, let $D(z, r)$ be the disc of $\mathbb{C}$
centered at $z$ and of radius $r$.

\begin{lemma} (See \cite{cu2,tre}.) For an operator $T\in B(H)$ and $\alpha > 0$, the following statements hold.\\
(1) $\sigma(T) + D(0, \varepsilon) \subseteq \sigma_{\varepsilon}(T)$.\\
(2) If $T$ is normal, then $\sigma_{\varepsilon}(T)=\sigma(T) + D(0, \varepsilon)$.\\
(3) For any $\alpha \in \mathbb{C}, \sigma_{\varepsilon}(T+\alpha I)=\alpha + \sigma_{\varepsilon}(T)$.\\
(4) For any nonzero $\alpha \in \mathbb{C}, \sigma_{\varepsilon}(\alpha T)=\alpha \sigma_{\frac{\varepsilon}{|\alpha|}}(T)$.\\
(5) For any $\alpha \in \mathbb{C}$, we have $\sigma_{\varepsilon}(T)= D(\alpha, \varepsilon)$ if and only if $T=\alpha I$.\\
(6) $\sigma_{\varepsilon}(T^{t})=\sigma_{\varepsilon}(T)$, where $T^{t}$ denotes the transpose of $T$ relative to an arbitrary but fixed orthonormal basis
of $H$.\\
(7) For every unitary operator $U\in B(H)$, we have $\sigma_{\varepsilon}(UTU^{\ast})=\sigma_{\varepsilon}(T)$.\\
(8) For every conjugate unitary operator $U$, we have $\sigma_{\varepsilon}(UTU^{\ast})=\sigma_{\varepsilon}(T^{\ast})$.\\
\end{lemma}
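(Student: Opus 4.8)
The plan is to deduce all eight items from the two descriptions of $\sigma_{\varepsilon}$ recalled above, together with the elementary bound $\|(\lambda I-T)^{-1}\|\ge\mathrm{dist}(\lambda,\sigma(T))^{-1}$, valid for $\lambda\notin\sigma(T)$, which becomes an equality when $T$ is normal (by the spectral theorem $(\lambda I-T)^{-1}$ is then unitarily equivalent to multiplication by $(\lambda-g)^{-1}$ with $g$ ranging over $\sigma(T)$, so its norm is $\mathrm{dist}(\lambda,\sigma(T))^{-1}$). Granting this, items (3), (4), (6), (7), (8) are formal identities for the resolvent, items (1) and (2) are immediate from the bound, and the only statement needing real work is (5).

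For (3) I would change the spectral variable: $\lambda I-(T+\alpha I)=(\lambda-\alpha)I-T$, so $\lambda\in\sigma_{\varepsilon}(T+\alpha I)$ iff $\lambda-\alpha\in\sigma_{\varepsilon}(T)$. For (4), $\lambda I-\alpha T=\alpha\big((\lambda/\alpha)I-T\big)$ yields $\|(\lambda I-\alpha T)^{-1}\|=|\alpha|^{-1}\|((\lambda/\alpha)I-T)^{-1}\|$, and comparing with the threshold $\varepsilon^{-1}$ rewrites $\lambda\in\sigma_{\varepsilon}(\alpha T)$ as $\lambda/\alpha\in\sigma_{\varepsilon/|\alpha|}(T)$. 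For (6), (7), (8) the common idea is that $\lambda I-(\cdot)$ is a conjugate of $\lambda I-T$ by a possibly conjugate-linear isometry: $\lambda I-UTU^{\ast}=U(\lambda I-T)U^{\ast}$ for unitary $U$; $\lambda I-T^{t}=(\lambda I-T)^{t}$ with $\|M^{t}\|=\|M\|$ relative to a fixed orthonormal basis; and $\lambda I-UTU^{\ast}=U(\bar\lambda I-T)U^{\ast}$ for conjugate-unitary $U$, whence $\|(\lambda I-UTU^{\ast})^{-1}\|=\|(\bar\lambda I-T)^{-1}\|=\|(\lambda I-T^{\ast})^{-1}\|$ since $(\lambda I-T^{\ast})=(\bar\lambda I-T)^{\ast}$. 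In each case the resolvents have equal norm, so the pseudo spectra coincide. Finally, (1) holds because $\lambda\in\sigma(T)+D(0,\varepsilon)$ gives $\mathrm{dist}(\lambda,\sigma(T))\le\varepsilon$, hence $\|(\lambda I-T)^{-1}\|\ge\varepsilon^{-1}$; and (2) holds because for normal $T$ the bound is an equality, so $\sigma_{\varepsilon}(T)=\{\lambda:\mathrm{dist}(\lambda,\sigma(T))\le\varepsilon\}=\sigma(T)+D(0,\varepsilon)$.

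For (5), the implication $T=\alpha I\Rightarrow\sigma_{\varepsilon}(T)=D(\alpha,\varepsilon)$ is immediate from (2). For the converse, suppose $\sigma_{\varepsilon}(T)=D(\alpha,\varepsilon)$. By (1), $\sigma(T)+D(0,\varepsilon)\subseteq D(\alpha,\varepsilon)$, so each $\mu\in\sigma(T)$ satisfies $D(\mu,\varepsilon)\subseteq D(\alpha,\varepsilon)$, which forces $\mu=\alpha$; as $\sigma(T)\neq\emptyset$ this gives $\sigma(T)=\{\alpha\}$. Putting $S:=T-\alpha I$ and using (3), we may assume $\alpha=0$, so $S$ is quasinilpotent with $\sigma_{\varepsilon}(S)=D(0,\varepsilon)$, and it remains to see $S=0$. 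I would argue by contradiction: if $S\neq0$ I claim there is a unit vector $x$ with $\|Sx\|^{2}<2\varepsilon\,|\langle Sx,x\rangle|$; then for $\lambda=\varepsilon\,\langle Sx,x\rangle/|\langle Sx,x\rangle|$, which has $|\lambda|=\varepsilon$, one computes $\|(\lambda I-S)x\|^{2}=\varepsilon^{2}-2\varepsilon\,|\langle Sx,x\rangle|+\|Sx\|^{2}<\varepsilon^{2}$, so $\|(\lambda I-S)^{-1}\|\ge1/\|(\lambda I-S)x\|>\varepsilon^{-1}$. By continuity of the resolvent norm this strict inequality survives on a neighbourhood of $\lambda$, which therefore lies in $\sigma_{\varepsilon}(S)$ while meeting $\{|z|>\varepsilon\}$, contradicting $\sigma_{\varepsilon}(S)=D(0,\varepsilon)$. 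To establish the claim I would use that a quasinilpotent operator is not bounded below (otherwise $r(S)>0$), so there are unit vectors $x_{n}$ with $\|Sx_{n}\|\to0$; perturbing $x_{n}$ slightly along a direction not killed by $S$ keeps $|\langle Sx,x\rangle|$ of order $\|Sx\|$ rather than of order $\|Sx\|^{2}$, which makes $\|Sx\|^{2}/|\langle Sx,x\rangle|$ arbitrarily small.

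The bookkeeping in the first two paragraphs is entirely routine once one argues with $\|(\lambda I-T)^{-1}\|$. The step I expect to be the main obstacle is the final claim above — quantifying the failure of a nonzero quasinilpotent to be bounded below so as to separate $\|Sx\|$ from $\langle Sx,x\rangle$; this is the only place where something beyond formal manipulation is needed, and if a self-contained argument turns out to be delicate one can instead appeal to the resolvent-norm analysis in the references cited for the lemma.
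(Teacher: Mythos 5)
The paper offers no argument for this lemma --- it is quoted from \cite{cu2,tre} --- so there is nothing internal to compare against; I am judging your proof on its own terms. Your handling of (1)--(4) and (6)--(8) is correct and standard, and so is the reduction of (5) to: a quasinilpotent $S$ with $\sigma_{\varepsilon}(S)=D(0,\varepsilon)$ must vanish. The way you derive a contradiction \emph{from} the inequality $\|Sx\|^{2}<2\varepsilon\,|\langle Sx,x\rangle|$ is also sound: it yields $\|(\lambda I-S)x\|<\varepsilon$ for some $\lambda$ with $|\lambda|=\varepsilon$, hence $\|(\lambda I-S)^{-1}\|>\varepsilon^{-1}$, and continuity of the resolvent norm on $\mathbb{C}\setminus\{0\}$ propagates this to points outside the disc.

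The genuine gap is exactly the step you flag: producing a unit vector with $\|Sx\|^{2}<2\varepsilon\,|\langle Sx,x\rangle|$. The perturbation you sketch does not work in general. Writing $y=x_{n}+tz$ with $\|Sx_{n}\|\to 0$, the first-order term of $\langle Sy,y\rangle$ is $t\bigl(\langle Sx_{n},z\rangle+\langle Sz,x_{n}\rangle\bigr)$, and its supremum over unit vectors $z$ is comparable to $\max\bigl(\|Sx_{n}\|,\|S^{\ast}x_{n}\|\bigr)$. For a quasinilpotent weighted shift whose weights tend to $0$, the natural approximate kernel vectors satisfy $\|S^{\ast}x_{n}\|\to 0$ as well, so this first-order term gives no leverage and ``perturbing along a direction not killed by $S$'' does not keep $|\langle Sy,y\rangle|$ of order $\|Sy\|$. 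The claim itself is true, but it requires a different mechanism. One self-contained route: suppose, for contradiction, that $\|Sx\|^{2}\ge c\,|\langle Sx,x\rangle|$ for all $x$ and some $c>0$. Then for $|\mu|\ge 2/c$,
$$\|(I-\mu S)x\|^{2}=\|x\|^{2}-2\,\mathrm{Re}\bigl(\mu\langle Sx,x\rangle\bigr)+|\mu|^{2}\|Sx\|^{2}\ \ge\ \|x\|^{2}+|\langle Sx,x\rangle|\bigl(c|\mu|^{2}-2|\mu|\bigr)\ \ge\ \|x\|^{2},$$
so $\|(I-\mu S)^{-1}\|\le 1$ there. Since $S$ is quasinilpotent, $\mu\mapsto (I-\mu S)^{-1}=\sum_{n\ge 0}\mu^{n}S^{n}$ is entire, and it is bounded on the compact disc $|\mu|\le 2/c$; hence each scalar function $\mu\mapsto\langle (I-\mu S)^{-1}x,y\rangle$ is a bounded entire function, constant by Liouville, which forces $\langle S^{n}x,y\rangle=0$ for all $n\ge 1$ and so $S=0$. (Equivalently, apply the maximum principle to the subharmonic function $\mu\mapsto\|(I-\mu S)^{-1}\|$, which is $\ge 1$ everywhere because the spectrum of $(I-\mu S)^{-1}$ is $\{1\}$, and $\le 1$ for $|\mu|\ge 2/c$.) Substituting this for your perturbation step, with $c=2\varepsilon$, closes the gap and the rest of your argument goes through.
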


The following lemma describes the spectrum of the skew Lie product $[x\otimes x,T]_{\ast}$ for any
nonzero vector $x \in H$ and operator $T \in B(H)$.

\begin{lemma} \label{l2} (See \cite[Corollary 2.1]{al}.)
Let $T \in B(H)$ and $x \in H$ be a nonzero vector. Then
$$\sigma(T(x\otimes x)+(x\otimes x)T)=\{0, \left \langle Tx,x \right \rangle \pm \sqrt{ \left \langle T^{2}x,x\right \rangle } ~ \}.$$
\end{lemma}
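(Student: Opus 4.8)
The plan is to exploit that the operator $A:=T(x\otimes x)+(x\otimes x)T$ has rank at most two, which reduces the spectrum computation to that of a $2\times2$ matrix. Write $P=x\otimes x$ and assume $\|x\|=1$, as the stated formula requires (for a general nonzero $x$ one carries an extra factor $\|x\|$ inside the square root). First I would record the identities $TP=(Tx)\otimes x$ and $PT=x\otimes(T^{*}x)$, from which $A=(Tx)\otimes x+x\otimes(T^{*}x)$ has range contained in $V:=\mathrm{span}\{x,Tx\}$; hence $V$ is $A$-invariant, $\dim V\le 2$, and, since $\dim H=\infty$, the operator $A$ has nontrivial kernel, so $0\in\sigma(A)$. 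Then I would invoke the standard fact that the nonzero part of the spectrum of a finite rank operator consists of eigenvalues, together with the observation that any eigenvector for a nonzero eigenvalue lies in $\mathrm{ran}(A)\subseteq V$; this gives $\sigma(A)\setminus\{0\}=\sigma(A|_{V})\setminus\{0\}$, and therefore $\sigma(A)=\{0\}\cup\sigma(A|_{V})$.

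It then remains to compute the spectrum of the restriction $A|_{V}$ on the at most two-dimensional space $V$. In the generic case where $x$ and $Tx$ are linearly independent, $\{x,Tx\}$ is a basis of $V$ and a direct evaluation gives
\[
Ax=\langle Tx,x\rangle\, x+Tx,\qquad A(Tx)=\langle T^{2}x,x\rangle\, x+\langle Tx,x\rangle\, Tx,
\]
so the matrix of $A|_{V}$ in this basis is $\left(\begin{smallmatrix}\langle Tx,x\rangle & \langle T^{2}x,x\rangle\\ 1 & \langle Tx,x\rangle\end{smallmatrix}\right)$; its characteristic polynomial $(\lambda-\langle Tx,x\rangle)^{2}-\langle T^{2}x,x\rangle$ has roots $\langle Tx,x\rangle\pm\sqrt{\langle T^{2}x,x\rangle}$, which together with $0$ is exactly the asserted set. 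In the degenerate case $Tx=\mu x$ for a scalar $\mu$ I would argue separately: here $V=\mathrm{span}\{x\}$, the operator $A$ has rank at most one with $Ax=2\mu x$, so $\sigma(A)=\{0,2\mu\}$; since in this case $\langle Tx,x\rangle=\mu$ and $\langle T^{2}x,x\rangle=\mu^{2}$, the set $\{0,\langle Tx,x\rangle\pm\sqrt{\langle T^{2}x,x\rangle}\}$ equals $\{0,\mu\pm\mu\}=\{0,2\mu\}$, as needed.

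I do not expect a genuine obstacle here — the substance is the $2\times2$ eigenvalue computation. The only two points requiring care are the reduction $\sigma(A)=\{0\}\cup\sigma(A|_{V})$ (routine finite-rank spectral theory, or else a direct argument solving $(\lambda I-A)y=h$ for $y$ modulo $V$) and checking that the linearly dependent case $Tx\in\mathbb{C}x$ — including $Tx=0$, where $A$ may in fact be nilpotent — is still described by the formula; both are handled above, and everything else is bookkeeping.
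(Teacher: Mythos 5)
Your proof is correct. Note that the paper itself offers no proof of this lemma: it is quoted from Alzedani and Mabrouk \cite[Corollary 2.1]{al}, so there is no internal argument to compare against; your reduction to the invariant two-dimensional subspace $V=\mathrm{span}\{x,Tx\}$, the $2\times 2$ characteristic polynomial $(\lambda-\langle Tx,x\rangle)^{2}-\langle T^{2}x,x\rangle$, and the separate treatment of the degenerate case $Tx\in\mathbb{C}x$ (including the nilpotent possibility) together constitute a complete and standard proof of the cited result. Your observation that the formula as displayed is only correct for a unit vector $x$ is also well taken: for general nonzero $x$ the eigenvalues are $\langle Tx,x\rangle\pm\|x\|\sqrt{\langle T^{2}x,x\rangle}$ (equivalently, a factor $\|x\|^{2}$ appears under the root), and since the lemma is invoked in the paper with unit vectors, nothing downstream is affected. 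The only cosmetic point is that $\sigma(A)=\{0\}\cup\sigma(A|_{V})$ should be read as $\{0\}\cup\bigl(\sigma(A|_{V})\setminus\{0\}\bigr)$ together with $0\in\sigma(A)$, which is exactly what your kernel and finite-rank arguments establish.
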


The next lemma gives necessary and sufficient conditions for two operators to
be the same.

\begin{lemma} \label{l2} (See \cite[Lemma 2.2]{al}.)
Let $T$ and $S$ be in $B(H)$. Then the following statements are
equivalent.\\
$(1)$ $T=S$.\\
$(2)$ $\sigma([A,T]_{\ast})=\sigma([A,S]_{\ast})$ for every operator $A\in B(H)$.\\
$(2)$ $\sigma([A,T]_{\ast})=\sigma([A,S]_{\ast})$ for every operator $A\in B_{a}(H)$.
\end{lemma}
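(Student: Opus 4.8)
The plan is to run the cycle $(1)\Rightarrow(2)\Rightarrow(3)\Rightarrow(1)$. The implication $(1)\Rightarrow(2)$ is trivial, since $T=S$ forces $[A,T]_{\ast}=[A,S]_{\ast}$ as operators for every $A$, hence equality of spectra; and $(2)\Rightarrow(3)$ is immediate because $B_{a}(H)\subseteq B(H)$. So everything reduces to $(3)\Rightarrow(1)$.

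For $(3)\Rightarrow(1)$, fix a nonzero $x\in H$ and apply the hypothesis to $A=i\,(x\otimes x)$, which lies in $B_{a}(H)$ because $x\otimes x$ is self-adjoint. Since $[A,T]_{\ast}=AT-TA^{\ast}=i\big((x\otimes x)T+T(x\otimes x)\big)$ and $\sigma(iM)=i\,\sigma(M)$, the preceding lemma (the formula for $\sigma\big(T(x\otimes x)+(x\otimes x)T\big)$) turns the hypothesis into
$$\big\{\,0,\ \langle Tx,x\rangle\pm\sqrt{\langle T^{2}x,x\rangle}\,\big\}=\big\{\,0,\ \langle Sx,x\rangle\pm\sqrt{\langle S^{2}x,x\rangle}\,\big\}$$
for every nonzero $x\in H$. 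Write $f(x)=\langle Tx,x\rangle$, $g(x)=\langle T^{2}x,x\rangle$, and let $\widetilde f,\widetilde g$ denote the analogous continuous functions attached to $S$. The heart of the matter is to extract $f=\widetilde f$ from this set identity. The key observation is that when $g(x)\neq0$ the distinct elements of $\{\,0,\,f(x)\pm\sqrt{g(x)}\,\}$ sum to $2f(x)$: this is clear when $0$ differs from $f(x)\pm\sqrt{g(x)}$, and a one-line check disposes of the coincidence case $f(x)^{2}=g(x)$. Hence on the open set $U=\{x:\ g(x)\neq0\ \text{and}\ \widetilde g(x)\neq0\}$ the two sides have equal sums, so $f=\widetilde f$ on $U$. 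Provided $T^{2}\neq0$ and $S^{2}\neq0$, the sets $\{g\neq0\}$ and $\{\widetilde g\neq0\}$ are dense (if $\langle T^{2}x,x\rangle$ vanished on a ball it would vanish identically, forcing $T^{2}=0$), so $U$, the intersection of two dense open sets, is dense; by continuity $f=\widetilde f$ throughout $H$, i.e.\ $\langle(T-S)x,x\rangle=0$ for all $x$, and the polarization identity gives $T=S$.

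It remains to treat the degenerate case, where one of $T^{2},S^{2}$ vanishes; by the symmetry of the set identity we may assume $T^{2}=0$. Then $g\equiv0$ and the left-hand set is $\{0,f(x)\}$ for every $x$. If $S^{2}\neq0$, then on the dense open set $\{\widetilde g\neq0\}$ the right-hand set $\{\,0,\widetilde f(x)\pm\sqrt{\widetilde g(x)}\,\}$ must equal $\{0,f(x)\}$, which forces $\{\,\widetilde f(x)\pm\sqrt{\widetilde g(x)}\,\}=\{0,f(x)\}$ and hence $f(x)=2\widetilde f(x)$ there, so $f=2\widetilde f$ on $H$, giving $T=2S$ and $0=T^{2}=4S^{2}$ --- a contradiction. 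Thus $S^{2}=0$ as well, both sides collapse to $\{0,f(x)\}$ and $\{0,\widetilde f(x)\}$, so $f=\widetilde f$ and $T=S$. I expect the main obstacle to be exactly this bookkeeping around degeneracies --- verifying the ``sum of distinct roots'' claim through the coincidence configurations, and excluding the pathology in which $g$ vanishes identically while $\widetilde g$ does not --- after which the density reduction and the polarization step are routine.
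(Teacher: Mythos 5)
The paper does not actually prove this lemma --- it is quoted verbatim from \cite[Lemma 2.2]{al} --- so there is no in-paper argument to compare against; what can be said is that your proof is correct and is the natural one, built on the same rank-one formula (Lemma 1.2 = \cite[Corollary 2.1]{al}) that the cited source itself establishes for exactly this purpose. The cycle $(1)\Rightarrow(2)\Rightarrow(3)$ is indeed trivial, and your reduction of $(3)\Rightarrow(1)$ to the set identity $\{0,\langle Tx,x\rangle\pm\sqrt{\langle T^{2}x,x\rangle}\}=\{0,\langle Sx,x\rangle\pm\sqrt{\langle S^{2}x,x\rangle}\}$ via $A=i(x\otimes x)\in B_{a}(H)$ is exactly right. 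The ``sum of the distinct elements equals $2f(x)$'' device checks out: when $g(x)\neq0$ the two roots $f\pm\sqrt{g}$ are distinct, and whether or not one of them coincides with $0$ the distinct elements of the three-term list still sum to $2f$, so equality of sets forces $f=\widetilde f$ on $U$; the density of $U$ (vanishing of $\langle T^{2}x,x\rangle$ on a ball forces $T^{2}=0$ by complex polarization) and the final polarization step are sound, as is the exclusion of the mixed degenerate case via $T=2S$, $0=T^{2}=4S^{2}$. Two cosmetic remarks: the formula of Lemma 1.2 as displayed is really the unit-vector version (for general nonzero $x$ a factor $\|x\|$ enters the square root), so you should either normalize $x$ or note that homogeneity of $\langle Tx,x\rangle$ lets you pass from unit vectors to all of $H$ before invoking continuity and polarization; and your argument in fact establishes the stronger statement that equality of $\sigma([A,T]_{\ast})$ and $\sigma([A,S]_{\ast})$ for the rank-one anti-self-adjoint operators $A=i(x\otimes x)$ alone already forces $T=S$, which is all the main theorems of the paper ever use.
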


The following theorem will be useful in the proofs of the main results.

\begin{theorem}\label{t1} (See \cite[Theorem 3.3]{cu2}.)
Let $\varepsilon >0$. Then a surjective map $\varphi: B_{s}(H) \rightarrow B_{s}(H)$ satisfies
$$\sigma_{\varepsilon}(TS+ST)=\sigma_{\varepsilon}(\varphi(T)\varphi(S)+\varphi(S)\varphi(T))$$
if and only if there exists a unitary operator $U\in B(H)$ such that $\varphi$ has the form
 $T \rightarrow \mu UTU^{\ast}$ or $T \rightarrow \mu UT^{t}U^{\ast}$,
where $\mu \in \{-1,1\}$.
\end{theorem}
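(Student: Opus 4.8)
The ``if'' direction I would dispatch by a direct check: for $\varphi(T)=\mu UTU^{\ast}$ with $U$ unitary and $\mu^{2}=1$, one has $\varphi(T)\varphi(S)+\varphi(S)\varphi(T)=U(TS+ST)U^{\ast}$, so equality of the $\varepsilon$-pseudo spectra follows from Lemma 1.1(7); for $\varphi(T)=\mu UT^{t}U^{\ast}$, using $T^{t}S^{t}=(ST)^{t}$ one gets $U(TS+ST)^{t}U^{\ast}$, and Lemma 1.1(6) and (7) finish it. For the ``only if'' direction, the first thing I would exploit is that $TS+ST$ and $\varphi(T)\varphi(S)+\varphi(S)\varphi(T)$ are self-adjoint, and that for self-adjoint operators the $\varepsilon$-pseudo spectrum determines the spectrum: by Lemma 1.1(2), $\sigma_{\varepsilon}(A)=\{z:\operatorname{dist}(z,\sigma(A))\le\varepsilon\}$, and intersecting this set with the line $\operatorname{Im}z=\varepsilon$ recovers $\sigma(A)$ (because $\operatorname{dist}(s+i\varepsilon,\sigma(A))\le\varepsilon$ iff $s\in\sigma(A)$). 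So the hypothesis is equivalent to
$$\sigma(TS+ST)=\sigma\big(\varphi(T)\varphi(S)+\varphi(S)\varphi(T)\big)\qquad(T,S\in B_{s}(H)),$$
and it remains to solve this spectral preserver problem.

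Next I would run the usual normalisations. Putting $T=S=0$ gives $\varphi(0)=0$ (since $2\varphi(0)^{2}$ is positive with spectrum $\{0\}$); and if $\varphi(T)=\varphi(S)$, applying the identity twice yields $\sigma(AT+TA)=\sigma(AS+SA)$ for all $A\in B_{s}(H)$, so, taking $A=x\otimes x$ with $\|x\|=1$ and using Lemma 1.2 (note that $0$ lies between $\langle Tx,x\rangle\pm\|Tx\|$ by Cauchy--Schwarz, so these two numbers are the extreme eigenvalues), I recover $\langle Tx,x\rangle=\langle Sx,x\rangle$ for all unit $x$, hence $T=S$; thus $\varphi$ is bijective and $\varphi^{-1}$ satisfies the same identity. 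Testing with $T=\lambda I$, $S=\mu I$ gives $\varphi(\lambda I)\varphi(\mu I)+\varphi(\mu I)\varphi(\lambda I)=2\lambda\mu I$, whence $\varphi(I)^{2}=I$ and $\varphi(\lambda I)=\lambda\varphi(I)$. With $S=I$ one has $\sigma(2T)=\sigma(\varphi(T)\varphi(I)+\varphi(I)\varphi(T))$, so if some nonzero $B\in B_{s}(H)$ anticommuted with $\varphi(I)$, then $T=\varphi^{-1}(B)$ would force $\sigma(\varphi^{-1}(B))=\{0\}$, i.e.\ $B=0$ --- impossible; since no nonzero self-adjoint operator anticommutes with the symmetry $\varphi(I)$, this forces $\varphi(I)=\pm I$. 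Replacing $\varphi$ by $-\varphi$ if needed (harmless, apart from flipping the sign $\mu$), I would then assume $\varphi(I)=I$, so that $\sigma(\varphi(T))=\sigma(T)$ and $\varphi$ maps projections bijectively onto projections.

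The core of the proof, and the step I expect to be the main obstacle, is to show that $\varphi$ carries rank-one projections to rank-one projections in both directions. The way I would do this is to characterise minimal projections intrinsically through the spectral data: a rank-one projection $P$ has $|\sigma(PS+SP)|\le 3$ for every self-adjoint $S$ (by Lemma 1.2), while a projection of rank $\ge 2$ admits some self-adjoint $S$ with $|\sigma(PS+SP)|\ge 4$ (this construction is the fiddly part); applying this to $\varphi(P)$ and to $\varphi^{-1}$ pins $\varphi(P)$ down to rank one. Then $P\mapsto\varphi(P)$ is a bijection of the rank-one projections preserving orthogonality in both directions (two of them, $x\otimes x$ and $y\otimes y$, being orthogonal iff $\sigma((x\otimes x)(y\otimes y)+(y\otimes y)(x\otimes x))=\{0\}$, again by Lemma 1.2), so Uhlhorn's theorem yields a unitary or conjugate-unitary $W$ with $\varphi(x\otimes x)=(Wx)\otimes(Wx)$; writing $W=U$ or $W=UJ$ ($U$ unitary, $J$ a conjugation) turns this into $\varphi(x\otimes x)=U(x\otimes x)U^{\ast}$ or $U(x\otimes x)^{t}U^{\ast}$. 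Finally, after replacing $\varphi$ by $T\mapsto U^{\ast}\varphi(T)U$ (resp.\ $T\mapsto U^{\ast}\varphi(T^{t})U$) one may assume $\varphi$ fixes every rank-one projection, and then Lemma 1.2 with $A=x\otimes x$ gives $\langle\varphi(T)x,x\rangle=\langle Tx,x\rangle$ for all unit $x$, hence $\varphi(T)=T$; undoing the reductions delivers $\varphi(T)=\mu UTU^{\ast}$ or $\varphi(T)=\mu UT^{t}U^{\ast}$ with $\mu\in\{-1,1\}$. Besides the rank-one characterisation, the other point requiring care is the bookkeeping at the end --- keeping track of which of the two forms, and which sign $\mu$, actually occurs.
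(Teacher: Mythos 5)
The paper does not prove this theorem at all: it is imported verbatim as Theorem 3.3 of Cui--Li--Poon \cite{cu2} and used as a black box, so there is no in-paper argument to compare yours against. Taken on its own terms, your outline is sound and essentially complete as a plan. The opening reduction is the right move: for self-adjoint $A$ one has $\sigma_{\varepsilon}(A)=\sigma(A)+\overline{D(0,\varepsilon)}$, and since two compact subsets of $\mathbb{R}$ with the same $\varepsilon$-fattening coincide (your intersection with the line $\operatorname{Im}z=\varepsilon$ does exactly this), the hypothesis really is equivalent to the spectral identity. The normalisations are correct, including the slightly delicate point that $\varphi(I)^{2}=I$ alone does not give $\varphi(I)=\pm I$ and that you must rule out nonzero self-adjoint anticommutants of $\varphi(I)$ via the $S=I$ substitution. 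The one step you rightly flag as fiddly --- producing, for a projection $P$ of rank at least $2$, a self-adjoint $S$ with $|\sigma(PS+SP)|\geq 4$ --- does go through, but note that purely diagonal choices fail when $\operatorname{rank}P=2$; you need an off-diagonal block, e.g.\ $P$ the projection onto $\operatorname{span}\{e_{1},e_{2}\}$ and $S=a\,e_{1}\otimes e_{1}+e_{2}\otimes e_{3}+e_{3}\otimes e_{2}$, for which $\sigma(PS+SP)=\{0,2a,1,-1\}$. With that in hand, the orthogonality criterion via Lemma 1.2, Uhlhorn's theorem (applicable since $\dim H=\infty\geq 3$; the paper's analogous step instead invokes \v{S}emrl \cite{sem}), and the final $\langle Tx,x\rangle=\langle\varphi(T)x,x\rangle$ argument all work as you describe.
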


\section{ \normalsize\bf  Main Results}

The following theorem is one of the purposes of this paper.

\begin{theorem}\label{l2}
Suppose that a bijective map $\varphi: B(H) \rightarrow B(H)$ satisfies
\begin{align*}
\sigma_{\varepsilon}([T_{1}\bullet T_{2},T_{3}]_{\ast})=\sigma_{\varepsilon}([\varphi(T_{1})\bullet \varphi(T_{2}),\varphi(T_{3})]_{\ast}),~~ (T_{1}, T_{2}, T_{3} \in B(H)).
\end{align*}
Then there exist an invertible operator $S\in B(H)$ and a unitary operator $U\in B(H)$ such that
 $\varphi(T)=SUTU^{\ast}$ or $\varphi(T)=SUT^{t}U^{\ast}$ for every $T \in B(H)$.
\end{theorem}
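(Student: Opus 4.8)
The plan is to follow the three‑step programme that is by now standard for pseudo‑spectrum preservers of products, and the first step is to normalise $\varphi$ at the distinguished operators $0$ and $I$. The workhorse is the rigidity clause ``$\sigma_\varepsilon(X)=D(\alpha,\varepsilon)$ if and only if $X=\alpha I$'' (part~(5) of the first lemma, \cite{cu2,tre}). Taking $T_3=0$ makes $[T_1\bullet T_2,0]_*=0$, so $\sigma_\varepsilon([\varphi(T_1)\bullet\varphi(T_2),\varphi(0)]_*)=D(0,\varepsilon)$ and hence $(\varphi(T_1)\bullet\varphi(T_2))\varphi(0)=\varphi(0)(\varphi(T_1)\bullet\varphi(T_2))^*$ for all $T_1,T_2$; since $I\bullet A=2A$, the choice $T_1=\varphi^{-1}(I)$ forces $\varphi(T_1)\bullet\varphi(T_2)$ to sweep out all of $B(H)$, so $A\varphi(0)=\varphi(0)A^*$ for every $A$, and $A=iI$ yields $\varphi(0)=0$. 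The same device with $T_1=T_2=I$ (so $I\bullet I=2I$ and $[2I,T_3]_*=0$) gives $(\varphi(I)^2+\varphi(I)\varphi(I)^*)A=A(\varphi(I)^2+\varphi(I)\varphi(I)^*)^*$ for all $A$; testing at $A=I$ and $A=iI$ shows $\varphi(I)^2+\varphi(I)\varphi(I)^*$ is simultaneously self-adjoint and anti-self-adjoint, hence zero. A short supplementary argument — the rigidity clause on rank-one test inputs together with the injectivity of $\varphi$ — then shows $S:=\varphi(I)$ is invertible, and, using parts~(3), (4), (7) of the first lemma to strip off a unitary conjugation and a scalar, one reduces to studying a bijection $\psi$ of $B(H)$ extracted from $\varphi$ which behaves correctly at $I$.

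\textbf{Reduction to a known preserver.} Because $I\bullet T_2=2T_2$, substituting a preimage of $I$ in the first slot and rescaling $\varepsilon$ by part~(4) of the first lemma converts the hypothesis into the two-variable identity
$$\sigma_{\varepsilon/2}\big([T_2\circ T_3]_*\big)=\sigma_{\varepsilon/2}\big([\psi(T_2)\circ\psi(T_3)]_*\big)\qquad(T_2,T_3\in B(H));$$
that is, $\psi$ preserves the $\varepsilon/2$-pseudo spectrum of the skew Lie product. One next shows that $\psi$ preserves anti-self-adjointness (again by the rigidity clause and the equality criterion of \cite[Lem.~2.2]{al}), and restricts both arguments to anti-self-adjoint operators $T_2=iA$, $T_3=iB$ with $A,B\in B_s(H)$: then $[iA\circ iB]_*=-(AB+BA)$, and writing $\psi(iA)=i\chi(A)$ with $\chi$ a bijection of $B_s(H)$, the identity becomes
$$\sigma_{\varepsilon/2}\big(AB+BA\big)=\sigma_{\varepsilon/2}\big(\chi(A)\chi(B)+\chi(B)\chi(A)\big)\qquad(A,B\in B_s(H)).$$
This is exactly the hypothesis of Theorem~\ref{t1}, which produces a unitary $U\in B(H)$ and a sign $\mu$ with $\chi(A)=\mu UAU^*$ for all $A\in B_s(H)$, or else $\chi(A)=\mu UA^tU^*$ for all such $A$; in particular $\psi$ has the corresponding form on $B_a(H)$.

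\textbf{From rank-one data to the global form.} It remains to propagate this from $B_a(H)$ to all of $B(H)$ and to recover $S$. For a unit vector $x$ and an anti-self-adjoint $T$, the bracket $[T\circ (x\otimes x)]_*$ coincides with the Jordan product $T(x\otimes x)+(x\otimes x)T$, which is anti-self-adjoint, hence normal, so by part~(2) of the first lemma and \cite[Cor.~2.1]{al} its $\varepsilon/2$-pseudo spectrum equals $\{0,\langle Tx,x\rangle\pm\sqrt{\langle T^2x,x\rangle}\}+D(0,\varepsilon/2)$. Using that $\psi$ already has the established form on $B_a(H)$, one checks that $\psi$ sends rank-one projections to rank-one projections; comparing the displayed pseudo-spectra for $T$ and $\psi(T)$ over all unit $x$ then recovers the quadratic form of $T$ from that of $\psi(T)$ and, via the standard reconstruction from rank-one projections, upgrades the (conjugate-)unitary $U$ to all of $B(H)$, giving $\psi(T)=\mu UTU^*$ (resp.\ $\mu UT^tU^*$) for every $T$. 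Undoing the normalisation of the first step reinstates the left factor, evaluating at $T=I$ identifies it as $S=\varphi(I)$, and absorbing $\mu$ into $S$ yields the two asserted forms $\varphi(T)=SUTU^*$ and $\varphi(T)=SUT^tU^*$.

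\textbf{Main obstacle.} The technical core is the reduction step: establishing that the bijection extracted from $\varphi$ genuinely preserves anti-self-adjointness — so that Theorem~\ref{t1} becomes applicable — while at the same time keeping the preimage of $I$ under control and showing that the sole residual freedom is the invertible left factor $S$. Everything must be carried out for a single fixed $\varepsilon$, with no possibility of intersecting over $\varepsilon$ to descend to the ordinary spectrum and invoke the cleaner criterion of \cite[Lem.~2.2]{al} directly; that restriction is what makes the argument heavier than its spectral analogue.
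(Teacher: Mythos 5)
Your overall architecture matches the paper's (normalize $\varphi$ at scalars, fix the first slot to reduce to a two‑variable skew Lie product preserver, pin down the map on a real form, then extend via rank‑one projections and an intersection of pseudo‑spectra), and your idea of feeding $\chi(A)=-i\psi(iA)$ into Theorem~\ref{t1} is a legitimate alternative to the paper's route through \v{S}emrl's theorem on orthogonality‑preserving maps of projections. However, your normalization step contains a concrete algebraic error. From $T_1=T_2=I$ you correctly obtain $B\,A=A\,B^{\ast}$ for all $A$, where $B=\varphi(I)\bullet\varphi(I)=\varphi(I)^{2}+\varphi(I)\varphi(I)^{\ast}$; but taking $A=iI$ gives $iB=iB^{\ast}$, i.e.\ exactly the same information as $A=I$, not $B=-B^{\ast}$. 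The correct conclusion is that $B$ is a self‑adjoint \emph{central} element, not that $B=0$. Indeed $B=0$ is incompatible with the theorem itself: the paper shows $\varphi(I)$ is self‑adjoint with $\varphi(I)^{2}$ central and invertible (its Claims 2 and 3, via $\sigma_{\varepsilon}([I\bullet iI,I]_{\ast})=\sigma_{\varepsilon}(4\varphi(I)^{2}\varphi(iI))=D(4i,\varepsilon)$), so $B=2\varphi(I)^{2}\neq 0$. Since your "short supplementary argument" for the invertibility of $\varphi(I)$ and your construction of the normalized $\psi$ both sit on top of this false identity, the reduction as written does not go through; the paper's actual normalization $\psi(T)=-i\varphi(I)\varphi(iI)\varphi(T)$ requires first proving $\varphi(iI)\in Z(B(H))$, $\varphi(iI)^{\ast}=-\varphi(iI)$, and $\varphi(I)^{2}\varphi(iI)=iI$, none of which your sketch establishes.

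Two further points are asserted rather than proved and are genuinely needed for your route. First, applying Theorem~\ref{t1} to $\chi$ requires $\psi$ to map $B_{a}(H)$ \emph{onto} $B_{a}(H)$; the easy direction (images of anti‑self‑adjoints are anti‑self‑adjoint) follows as in the paper's Claim~2, but the converse needs its own argument, which you only gesture at. Second, Theorem~\ref{t1} only determines $\psi$ on $B_{a}(H)$; the passage to all of $B(H)$ — the paper's Claims 6 and 7, which compute $\langle Tx,x\rangle=\langle\psi(T)x,x\rangle$ by scaling the rank‑one $i\alpha\,x\otimes x$, intersecting $\sigma_{\delta/\alpha}$ over $\alpha>0$ to recover the ordinary spectrum, and then invoking Lemma~1.2 — is compressed in your write‑up to "the standard reconstruction". (Incidentally, your closing remark that one cannot descend to the ordinary spectrum is not quite right: one cannot vary $\varepsilon$, but the homogeneity $\sigma_{\delta}(\alpha X)=\alpha\sigma_{\delta/\alpha}(X)$ lets the paper intersect over the scaling parameter instead, and this is exactly the mechanism that makes the final step work.)
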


\begin{proof}
The proof of it will be completed after checking several claims.\\

{\bf Claim 1.}  $\varphi(iI)^{\ast}=-\varphi(iI)\in Z(B(H))$.\\

Since $\varphi$ is surjective, there exists $S\in B(H)$ such that $\varphi(S)=\frac{iI}{2}$. So
\begin{align*}
D(0, \varepsilon)&= \sigma_{\varepsilon}([iI\bullet \varphi^{-1}(\frac{iI}{2}),S]_{\ast})=\sigma_{\varepsilon}([\varphi(iI)\bullet \frac{iI}{2},\frac{iI}{2}]_{\ast})\nonumber\\
&=\sigma_{\varepsilon}(\frac{-1}{2}(\varphi(iI)+\varphi(iI)^{\ast})).
\end{align*}
It follows from Lemma 1.1 that, $\varphi(iI)^{\ast}=-\varphi(iI)$.\\

Now let $T\in B(H)$ be an arbitrary operator. Then
\begin{align*}
D(0, \varepsilon)&= \sigma_{\varepsilon}([iI\bullet T,S]_{\ast})=\sigma_{\varepsilon}([\varphi(iI)\bullet \varphi(T),\varphi(S)]_{\ast})\nonumber\\
&=\sigma_{\varepsilon}([\varphi(iI)\varphi(T)+\varphi(T)\varphi(iI)^{\ast},\frac{iI}{2}]_{\ast})\nonumber\\
&=\sigma_{\varepsilon}(\frac{iI}{2}(\varphi(iI)(\varphi(T)-\varphi(T)^{\ast})-(\varphi(T)-\varphi(T)^{\ast})\varphi(iI))).
\end{align*}
By Lemma 1.1, we have $\varphi(iI)(\varphi(T)-\varphi(T)^{\ast})-(\varphi(T)-\varphi(T)^{\ast})\varphi(iI)=0$. It follows from the surjectivity of $\varphi$ that, $\varphi(iI)B=B\varphi(iI)$ for all $B \in B_{a}(H)$ and so $\varphi(iI)B=B\varphi(iI)$ for all $B \in B_{s}(H)$. Since for every $A\in B(H)$; we have $A=A_{1}+A_{2}$, where $A_{1}$ and $A_{2}$ are self-adjoint elements, Hence $\varphi(iI)A=A\varphi(iI)$ holds true for all $A \in B(H)$, then $\varphi(iI)\in Z(B(H))$. Similarly, we have $\varphi^{-1}(iI)\in Z(B(H))$.\\

{\bf Claim 2.}  $\varphi$ preserves the self-adjoint elements in both
direction, and $\varphi(iT)^{\ast}=-\varphi(iT)$ for every $T\in B_{s}(H)$.\\
Let $T=T^{\ast}$ and $\varphi(S)=\frac{I}{2}$ for some $S\in B(H)$. We have
\begin{align*}
D(0, \varepsilon)&= \sigma_{\varepsilon}([S\bullet T,\varphi^{-1}(iI)]_{\ast})=\sigma_{\varepsilon}([\frac{I}{2}\bullet \varphi(T),iI]_{\ast})\nonumber\\
&=\sigma_{\varepsilon}(i(\varphi(T)-\varphi(T)^{\ast})).
\end{align*}
It follows from Lemma 1.1 that, $\varphi(T)-\varphi(T)^{\ast}=0$, and so $\varphi(T)=\varphi(T)^{\ast}$. Similarly, if $\varphi(T)=\varphi(T)^{\ast}$, then
\begin{align*}
D(0, \varepsilon)&= \sigma_{\varepsilon}([\varphi(\frac{I}{2})\bullet \varphi(T),\varphi(iI)]_{\ast})=\sigma_{\varepsilon}([\frac{I}{2}\bullet T,iI]_{\ast})\nonumber\\
&=\sigma_{\varepsilon}(i(T-T^{\ast})),
\end{align*}
so $T=T^{\ast}$. For
the second part of this claim, let $T\in B_{s}(H)$ and $\varphi(S)=I$ for some $S\in B(H)$, we have
\begin{align*}
D(0, \varepsilon)&= \sigma_{\varepsilon}([iT\bullet \varphi^{-1}(iI),S]_{\ast})=\sigma_{\varepsilon}([\varphi(iT)\bullet iI,\varphi(S)]_{\ast})\nonumber\\
&=\sigma_{\varepsilon}(2i(\varphi(iT)+\varphi(iT)^{\ast})).
\end{align*}
Again by Lemma 1.1, we see that $\varphi(iT)^{\ast}=-\varphi(iT)$ for every $T\in B_{s}(H)$.\\

{\bf Claim 3.}  $\varphi^{2}(I)\varphi(iI)=iI$ and $\varphi^{2}(iI)\varphi(I)=-I$. Hence $\varphi(I)$ and $\varphi(iI)$ are invertible.\\

We have
\begin{align*}
D(4i,\varepsilon)&=\sigma_{\varepsilon}(4iI)=\sigma_{\varepsilon}([I\bullet iI,I]_{\ast})=\sigma_{\varepsilon}([\varphi(I)\bullet \varphi(iI),\varphi(I)]_{\ast})\nonumber\\
&=\sigma_{\varepsilon}([\varphi(I)\varphi(iI)+\varphi(iI)\varphi(I)^{\ast},\varphi(I)]_{\ast})=
\sigma_{\varepsilon}(4\varphi^{2}(I)\varphi(iI)).
\end{align*}
It follows from Lemma 1.1 that, $\varphi^{2}(I)\varphi(iI)=iI$.\\
Similarly, note that
\begin{align*}
D(-4, \varepsilon)&=\sigma_{\varepsilon}(-4I)=\sigma_{\varepsilon}([I\bullet iI,iI]_{\ast})=\sigma_{\varepsilon}([\varphi(I)\bullet \varphi(iI),\varphi(iI)]_{\ast})\nonumber\\
&=\sigma_{\varepsilon}([\varphi(I)\varphi(iI)+\varphi(iI)\varphi(I)^{\ast},\varphi(iI)]_{\ast})=
\sigma_{\varepsilon}(4\varphi^{2}(I)\varphi(iI)).
\end{align*}
It follows that, again by lemma 1.1 $\varphi^{2}(iI)\varphi(I)=-I$.\\

Now, we define a map $\psi:B(H) \rightarrow B(H)$ by $\psi(T)=-i\varphi(I)\varphi(iI)\varphi(T)$ for all $T \in B(H)$. It is easy to see that $\psi$ is a
bijective map with $\psi(I)= I$ and $\psi(iI)= iI$, and also satisfies $\sigma_{\varepsilon}([T_{1}\bullet T_{2},T_{3}]_{\ast})=\sigma_{\varepsilon}([\psi(T_{1})\bullet \psi(T_{2}),\psi(T_{3})]_{\ast})$
for all $T_{1}, T_{2},T_{3}\in B(H)$. Furthermore, it is clear that $\psi$ preserves the self-adjoint elements in both direction.\\

{\bf Claim 4.} We have the following statments:\\
(i) $\sigma_{\frac{\varepsilon}{2}}([T,S]_{\ast})=\sigma_{\frac{\varepsilon}{2}}([\psi(T),\psi(S)]_{\ast})$ for every $T,S\in B(H)$.\\
(ii) $\psi(\frac{iI}{2})=\frac{iI}{2}$.\\
(iii) $\sigma_{\frac{\varepsilon}{2}}(T)=\sigma_{\frac{\varepsilon}{2}}(\psi(T))$ for every $T\in B(H)$.\\
(v) $\psi(iT)=i\psi(T)$ for every $T\in B_{s}(H)$.\\

(i) For every $T, S\in B(H)$, we have
\begin{align*}
\sigma_{\varepsilon}(2(TS-ST^{\ast}))&=\sigma_{\varepsilon}([I\bullet T,S]_{\ast})=\sigma_{\varepsilon}([\psi(I)\bullet \psi(T),\psi(S)]_{\ast})\nonumber\\
&=\sigma_{\varepsilon}(2(\psi(T)\psi(S)-\psi(S)\psi(T)^{\ast})).
\end{align*}
It follows that $\sigma_{\frac{\varepsilon}{2}}([T,S]_{\ast})=\sigma_{\frac{\varepsilon}{2}}([\psi(T),\psi(S)]_{\ast})$ for every $T,S\in B(H)$.\\

(ii) We have
\begin{align*}
D(-2,\varepsilon)&=\sigma_{\varepsilon}(-2I)=\sigma_{\varepsilon}([I\bullet iI,\frac{iI}{2}]_{\ast})\nonumber\\
&=\sigma_{\varepsilon}([\psi(I)\bullet \psi(iI),\psi(\frac{iI}{2}))]_{\ast})=\sigma_{\varepsilon}(4i\psi(\frac{iI}{2})).
\end{align*}
It follows that, by lemma 1.1 $\psi(\frac{iI}{2})=\frac{iI}{2}$.\\

(iii) For every $T\in B(H)$, by $(ii)$ we have
\begin{align*}
\sigma_{\frac{\varepsilon}{2}}(iT)&=\sigma_{\frac{\varepsilon}{2}}(\frac{iI}{2}T+T\frac{iI}{2})=\sigma_{\frac{\varepsilon}{2}}(\frac{iI}{2}T-T(\frac{iI}{2})^{\ast})\nonumber\\
&=\sigma_{\frac{\varepsilon}{2}}(\psi(\frac{iI}{2})\psi(T)-\psi(T)\psi(\frac{iI}{2})^{\ast})\nonumber\\
&=\sigma_{\frac{\varepsilon}{2}}(\psi(\frac{iI}{2})\psi(T)+\psi(T)\psi(\frac{iI}{2}))\nonumber\\
&=\sigma_{\frac{\varepsilon}{2}}(\frac{iI}{2}\psi(T)+\psi(T)\frac{iI}{2})=
\sigma_{\frac{\varepsilon}{2}}(i\psi(T)).
\end{align*}
It follows that, $\sigma_{\frac{\varepsilon}{2}}(T)=\sigma_{\frac{\varepsilon}{2}}(\psi(T))$ for every $T\in B(H)$.\\

(v) Note that $S(iT)-(iT)S^{\ast}$ is normal, where $T\in B_{s}(H)$ and $S\in B(H)$, so from this and Lemma 1.1(2) we get
\begin{align*}
\sigma(\psi(S)\psi(iT)-\psi(iT)\psi(S)^{\ast})&=\sigma(S(iT)-(iT)S^{\ast})=i\sigma(ST-TS^{\ast})\nonumber\\
&=i\sigma(\psi(S)\psi(T)-\psi(T)\psi(S)^{\ast})\nonumber\\
&=\sigma(\psi(S)(i\psi(T))-(i\psi(T))\psi(S)^{\ast}).
\end{align*}
By surjectivity of $\psi$ and lemma 1.3, we have $\psi(iT)=i\psi(T)$ for every $T\in B_{s}(H)$.\\

{\bf Claim 5.} There exists a unitary operator $U$ on $H$ such that $\psi(T)=\lambda UTU^{\ast}$ or $\psi(T)=\lambda UT^{t}U^{\ast}$ for every $T\in B_{s}(H)$, where $\lambda \in \{-1,1\}$.\\

The equality $\sigma_{\frac{\varepsilon}{2}}(T)=\sigma_{\frac{\varepsilon}{2}}(\psi(T))$ for every $T\in B(H)$, together Lemma 1.1(2),
implies that $P\in P(H)$ if and only if $\psi(P)$ is a projection.
Let $P, Q\in P(H)$ such that $PQ=QP=0$. It follows from claim 5(v) that
\begin{align*}
D(0, \frac{\varepsilon}{2})&=\sigma_{\frac{\varepsilon}{2}}([iP,Q]_{\ast})=\sigma_{\frac{\varepsilon}{2}}([\psi(iP),\psi(Q)]_{\ast})\nonumber\\
&=\sigma_{\frac{\varepsilon}{2}}( i(\psi(P)\psi(Q)+\psi(Q)\psi(P))),
\end{align*}
and consequently, $\psi(P)\psi(Q)+\psi(Q)\psi(P)=0$. Since $\psi(P)$ and $\psi(Q)$ are projection, then $\psi(P)\psi(Q)=\psi(Q)\psi(P)=0$. Conversely, if $\psi(P)$ and $\psi(Q)$ are projections such that $\psi(P)\psi(Q)=\psi(Q)\psi(P)=0$, then a similar discussion implies that $PQ=QP=0$. Thus, $\psi:P(H) \rightarrow P(H)$
is a bijective map which preserves the orthogonality in both directions. So, by \cite{sem}, there exists a unitary
or conjugate unitary operator $U$ on $H$ such that $\psi(P)=UPU^{\ast}$ for every $P\in P(H)$.\\
Now let $T\in B_{s}(H)$ and $x\in H$ be an unit arbitrary nonzero vector. First assume that $U$ is unitary. It follows from Lemma 1.1(2) and claim 4(v) that
\begin{align*}
D(0,\frac{\varepsilon}{2})+\sigma(iT( x\otimes x)+(x\otimes x)iT)&=\sigma_{\frac{\varepsilon}{2}}(iT(x\otimes x)+(x\otimes x)iT)\nonumber\\
&=\sigma_{\frac{\varepsilon}{2}}(iT(x\otimes x)-(x\otimes x)(iT)^{\ast})\nonumber\\
&=\sigma_{\frac{\varepsilon}{2}}(\psi(iT) \psi(x\otimes x)-\psi(x\otimes x)\psi(iT)^{\ast})\nonumber\\
&=\sigma_{\frac{\varepsilon}{2}}(i\psi(T) U(x\otimes x)U^{\ast}+U(x\otimes x)U^{\ast}i\psi(T))\nonumber\\
&=D(0,\frac{\varepsilon}{2})+\sigma(i\psi(T) U(x\otimes x)U^{\ast}+U(x\otimes x)U^{\ast}i\psi(T)).
\end{align*}
So $\sigma(T(x\otimes x)+(x\otimes x)T)=\sigma(\psi(T) U(x\otimes x)U^{\ast}+U(x\otimes x)U^{\ast}\psi(T))$. Therefore
\begin{align*}
2\left \langle Tx,x \right \rangle &=Tr(T(x\otimes x)+(x\otimes x)T)\nonumber\\
&=Tr(\psi(T) U(x\otimes x)U^{\ast}+U(x\otimes x)U^{\ast}\psi(T))\nonumber\\
&=2\left \langle U^{\ast}\psi(T)U,x \right \rangle.
\end{align*}

It follows that $\psi(T)= UTU^{\ast}$ for every $T\in B_{s}(H)$.\\

Now assume that $U$ is conjugate unitary. We define the map $J : H \rightarrow H$ by $J(\sum_{i \in \Lambda}\lambda_{i}e_{i})=\sum_{i\in \Lambda}\lambda_{i}\overline{e_{i}}$, where $\{e_{i}\}_{i\in \Lambda}$ is an orthonormal basis of $H$. It is easy to see that $J$ is conjugate unitary and $JT^{\ast}J =T^{t}$. Let $U=VJ$, then $V$ is unitary,
and $\varphi(T)=VJTJV^{\ast}=VT^{t}V^{\ast}$ for every $T\in B(H)$. \\

It is easy to see that maps $T \rightarrow T^{t}$ and $T \rightarrow U^{\ast}TU$ preserve the pseudo spectrum of skew Lie product, so
we may as well assume that $\psi(T)=T$ for every $T\in B_{s}(H)$.\\

{\bf Claim 6.} $\psi(iT)=iT$ for every $T\in B_{s}(H)$.\\

Let $x\in H$ be an arbitrary nonzero vector and $S=iT$, where $T\in B_{s}(H)$. It follows from Lemma 1.1(2) that
\begin{align*}
D(0,\frac{\varepsilon}{2})+\sigma(S( x\otimes x)+(x\otimes x) S)&=\sigma_{\frac{\varepsilon}{2}}(S (x\otimes x)+(x\otimes x) S)\nonumber\\
&=\sigma_{\frac{\varepsilon}{2}}(S(x\otimes x)-(x\otimes x)S^{\ast})\nonumber\\
&=\sigma_{\frac{\varepsilon}{2}}(\psi(S) \psi(x\otimes x)-\psi(x\otimes x)\psi(S)^{\ast})\nonumber\\
&=\sigma_{\frac{\varepsilon}{2}}(\psi(S) (x\otimes x)+(x\otimes x)\psi(S))\nonumber\\
&=D(0,\frac{\varepsilon}{2})+\sigma(\psi(S) (x\otimes x)+(x\otimes x)\psi(S)).
\end{align*}
 Hence $\sigma(S(x\otimes x)+(x\otimes x) S)=\sigma(\psi(S) (x\otimes x)+(x\otimes x)\psi(S))$. By Lemma 1.2,

$$\{0, \left \langle Sx,x \right \rangle \pm \sqrt{ \left \langle S^{2}x,x\right \rangle} \}=\{0, \left \langle \psi(S)x,x \right \rangle \pm \sqrt{ \left \langle \psi(S)^{2}x,x\right \rangle} \}.$$
Therefore, either
$$\left \langle Sx,x \right \rangle + \sqrt{ \left \langle S^{2}x,x\right \rangle}=\left \langle \psi(S)x,x \right \rangle + \sqrt{ \left \langle \psi(S)^{2}x,x\right \rangle}$$
and
$$\left \langle Sx,x \right \rangle - \sqrt{ \left \langle S^{2}x,x\right \rangle}=\left \langle \psi(S)x,x \right \rangle - \sqrt{ \left \langle \psi(S)^{2}x,x\right \rangle},$$
or
$$\left \langle Sx,x \right \rangle + \sqrt{ \left \langle S^{2}x,x\right \rangle}=\left \langle \psi(S)x,x \right \rangle - \sqrt{\left \langle \psi(S)^{2}x,x\right \rangle}$$
and
$$\left \langle Sx,x \right \rangle - \sqrt{ \left \langle S^{2}x,x\right \rangle}=\left \langle \psi(S)x,x \right \rangle + \sqrt{ \left \langle \psi(S)^{2}x,x\right \rangle}.$$
Combining the two equations in either case, we clearly get that $\left \langle Sx,x \right \rangle =\left \langle \psi(S)x,x \right \rangle$. Since $x\in H$ is an arbitrary unit vector, we conclude that $\psi(iT)=iT$ for every $T\in B_{s}(H)$.\\

{\bf Claim 7.} The result in the theorem holds.\\

Let $T\in B(H)$ be arbitrary. For any unit vector $x\in H$ and $\alpha > 0$, we have
\begin{align*}
i\alpha\sigma_{\frac{\delta}{\alpha}}((x\otimes x)T+T(x\otimes x))&=\sigma_{\delta}((i\alpha x\otimes x)T- T(i\alpha x\otimes x)^{\ast})\nonumber\\
&=\sigma_{\delta}(\psi(i\alpha x\otimes x)\psi(T)-\psi(T)\psi(i\alpha x\otimes x)^{\ast})\nonumber\\
&=\sigma_{\delta}((i\alpha x\otimes x)\psi(T)+\psi(T)(i\alpha x\otimes x))\nonumber\\
&=i\alpha\sigma_{\frac{\delta}{\alpha}}((x\otimes x)\psi(T)+\psi(T)(x\otimes x)),
\end{align*}
where $\delta=\frac{\varepsilon}{2}$. On the other hand
\begin{align*}
\sigma((x\otimes x)T+T(x\otimes x))&=\bigcap_{\alpha>0} \sigma_{\frac{\delta}{\alpha}}((x\otimes x)T+T(x\otimes x))\nonumber\\
&=\bigcap_{\alpha>0}\sigma_{\frac{\delta}{\alpha}}((x\otimes x)\psi(T)+\psi(T)(x\otimes x))\nonumber\\
&=\sigma((x\otimes x)\psi(T)+\psi(T)(x\otimes x)).
\end{align*}
Thus $\sigma((x\otimes x)T+T(x\otimes x))=\sigma((x\otimes x)\psi(T)+\psi(T)(x\otimes x))$.
Therefore, following the same argument as the one in the proof of
Claim 6,  one concludes that $ \left \langle Tx,x \right \rangle= \left \langle \psi(T)x,x \right \rangle$ for any nonzero vector $x\in H$. Hence $\psi(T)=T$, and therefore $\varphi(T)=SUTU^{\ast}$ or $\varphi(T)=SUT^{t}U^{\ast}$ for every $T \in B(H)$, where $S=(i \varphi(I)\varphi(iI))^{-1}$ or $S=(-i \varphi(I)\varphi(iI))^{-1}$. The proof is complete.\\
\end{proof}

We end this paper with the following theorem which characterizes bijective maps that satisfy
$$\sigma_{\varepsilon}(T_{1}\diamond T_{2}\circ_{\ast} T_{3})=\sigma_{\varepsilon}(\varphi(T_{1})\diamond \varphi(T_{2})\circ_{\ast} \varphi(T_{3})),~~ (T_{1}, T_{2}, T_{3} \in B(H)),$$
where $T\diamond S=TS^{\ast}+S^{\ast}T$ and $T\circ_{\ast} S=TS^{\ast}-ST$ for every $T, S\in B(H)$.

\begin{theorem}\label{l2}
Suppose that a bijective map $\varphi: B(H) \rightarrow B(H)$ satisfies
$$\sigma_{\varepsilon}(T_{1}\diamond T_{2}\circ_{\ast} T_{3})=\sigma_{\varepsilon}(\varphi(T_{1})\diamond \varphi(T_{2})\circ_{\ast} \varphi(T_{3})),~~ (T_{1}, T_{2}, T_{3} \in B(H)).$$
If $\varphi(iI)$ be anti-selfadjoint, then $\varphi^{2}(I)$ is invertible and there exist a unitary operator $U\in B(H)$ such that
 $\varphi(T)=\lambda(\varphi^{2}(I))^{-1}UTU^{\ast}$ or $\varphi(T)=\lambda(\varphi^{2}(I))^{-1}UT^{t}U^{\ast}$ for every $T \in B(H)$, where $\lambda \in \{-1,1\}$.
\end{theorem}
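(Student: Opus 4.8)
The plan is to follow the pattern of the preceding theorem: first determine $\varphi(I)$ and $\varphi(iI)$ by feeding scalar operators into the three slots, then \emph{collapse} the triple product to the Jordan $\ast$-product $\diamond$ and to the skew product $\circ_{\ast}$ by freezing one slot at a scalar, and finally invoke Theorem \ref{t1} on the self-adjoint part together with a rank-one argument to recover $\varphi$ on all of $B(H)$. Write $\Phi(A,B,C):=(A\diamond B)\circ_{\ast}C=(AB^{\ast}+B^{\ast}A)C^{\ast}-C(AB^{\ast}+B^{\ast}A)$; a direct computation gives the identities $\Phi(A,B,I)\equiv 0$, $\ \Phi(A,B,iI)=-2i\,(A\diamond B)$, and $\ \Phi(A,I,C)=2\,(A\circ_{\ast}C)$, which drive the whole argument. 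Note also that $\{A\diamond B:A,B\in B(H)\}=B(H)$, since $A\diamond I=2A$.

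\emph{Step 1 (normalisation).} Taking $T_{3}=I$ and using $\Phi(\cdot,\cdot,I)\equiv 0$ gives $\Phi(\varphi(T_{1}),\varphi(T_{2}),\varphi(I))=0$ for all $T_{1},T_{2}$, whence (by surjectivity and the remark above) $X\varphi(I)^{\ast}=\varphi(I)X$ for every $X$, so $\varphi(I)=cI$ with $c\in\mathbb{R}\setminus\{0\}$. Taking $T_{1}=T_{2}=I$, $T_{3}=iI$ turns the functional equation into $\sigma_{\varepsilon}(-4iI)=\sigma_{\varepsilon}\big(2c^{2}(\varphi(iI)^{\ast}-\varphi(iI))\big)$, so Lemma 1.1(5) gives $\varphi(iI)^{\ast}-\varphi(iI)=-2ic^{-2}I$; this is exactly where the hypothesis $\varphi(iI)^{\ast}=-\varphi(iI)$ is used, since it now yields $\varphi(iI)=ic^{-2}I$. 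Finally, from $\Phi(\cdot,\cdot,iI)=-2i(\cdot\diamond\cdot)$, $\varphi(I)=cI$ and $\varphi(iI)=ic^{-2}I$ one obtains, after applying Lemma 1.1(4) twice, $\sigma_{\varepsilon/4}(T)=c^{-1}\sigma_{\varepsilon|c|/4}(\varphi(T))$ for all $T$; the choice $T=iI$ forces $D(i,\varepsilon/4)=D(ic^{-3},\varepsilon/4)$, i.e.\ $c^{3}=1$, hence $c=1$. Thus $\varphi(I)=I$, $\varphi(iI)=iI$ and $\varphi^{2}(I)=I$.

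\emph{Step 2 (reduction to $B_{s}(H)$ and Theorem \ref{t1}).} From $\Phi(T_{1},I,T_{3})=2(T_{1}\circ_{\ast}T_{3})$ and $\varphi(I)=I$ we get $\sigma_{\varepsilon/2}(T_{1}\circ_{\ast}T_{3})=\sigma_{\varepsilon/2}(\varphi(T_{1})\circ_{\ast}\varphi(T_{3}))$, and with $T_{1}=iI$, $\varphi(iI)=iI$ this becomes $\sigma_{\varepsilon/2}(T_{3}^{\ast}-T_{3})=\sigma_{\varepsilon/2}(\varphi(T_{3})^{\ast}-\varphi(T_{3}))$. Since an operator $A$ is self-adjoint iff $\sigma_{\varepsilon/2}(A^{\ast}-A)=D(0,\varepsilon/2)$ (Lemma 1.1(5)), this shows $T_{3}\in B_{s}(H)\iff\varphi(T_{3})\in B_{s}(H)$, so $\varphi$ restricts to a bijection of $B_{s}(H)$. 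On the other hand, from $\Phi(T_{1},T_{2},iI)=-2i(T_{1}\diamond T_{2})$ and $\varphi(iI)=iI$ we obtain $\sigma_{\varepsilon/2}(T_{1}\diamond T_{2})=\sigma_{\varepsilon/2}(\varphi(T_{1})\diamond\varphi(T_{2}))$ for all $T_{1},T_{2}$; since $A\diamond B=AB+BA$ on $B_{s}(H)$, the restriction $\varphi|_{B_{s}(H)}$ satisfies the hypothesis of Theorem \ref{t1} with $\varepsilon$ replaced by $\varepsilon/2$. Hence there is a unitary $U$ with $\varphi(T)=\mu UTU^{\ast}$ or $\mu UT^{t}U^{\ast}$ on $B_{s}(H)$, and $\varphi(I)=I$ forces $\mu=1$. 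Replacing $\varphi$ by $T\mapsto U^{\ast}\varphi(T)U$ (and, in the transpose case, composing also with the transpose) — operations under which $\Phi$ transforms equivariantly and which preserve the functional equation by Lemma 1.1(6)--(7) — we may assume $\varphi(T)=T$ for every $T\in B_{s}(H)$, while still $\varphi(I)=I$ and $\varphi(iI)=iI$.

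\emph{Step 3 (conclusion) and the main difficulty.} Fix $T\in B(H)$ and a unit vector $x$. For $\alpha>0$ the rank-one operator $\alpha(x\otimes x)$ lies in $B_{s}(H)$, hence is fixed by $\varphi$; substituting $T_{1}=T$, $T_{2}=\alpha(x\otimes x)$ into $\sigma_{\varepsilon/2}(T_{1}\diamond T_{2})=\sigma_{\varepsilon/2}(\varphi(T_{1})\diamond\varphi(T_{2}))$ and using Lemma 1.1(4) gives $\sigma_{\varepsilon/(2\alpha)}\big(T(x\otimes x)+(x\otimes x)T\big)=\sigma_{\varepsilon/(2\alpha)}\big(\varphi(T)(x\otimes x)+(x\otimes x)\varphi(T)\big)$; intersecting over all $\alpha>0$ and using $\sigma(A)=\bigcap_{\delta>0}\sigma_{\delta}(A)$ yields the corresponding equality of ordinary spectra. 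By Lemma 1.2 this reads $\{0,\langle Tx,x\rangle\pm\sqrt{\langle T^{2}x,x\rangle}\}=\{0,\langle\varphi(T)x,x\rangle\pm\sqrt{\langle\varphi(T)^{2}x,x\rangle}\}$, and, exactly as in the rank-one step of the proof of the preceding theorem, adding the two relations in either matching case gives $\langle Tx,x\rangle=\langle\varphi(T)x,x\rangle$. As $x$ was arbitrary, $\varphi(T)=T$; undoing the reduction of Step 2 and recalling $(\varphi^{2}(I))^{-1}=I$ gives the asserted form. The crux of the argument is Steps 1--2: one must distil the \emph{bilinear} hypothesis of Theorem \ref{t1} out of a \emph{trilinear} relation, and this succeeds only because freezing the third slot at $iI$ (resp.\ the second at $I$) collapses $\Phi$ to $-2i(\cdot\diamond\cdot)$ (resp.\ to $2(\cdot\circ_{\ast}\cdot)$); and the anti-self-adjointness of $\varphi(iI)$ is essential precisely at the normalisation, since without it the computation of Step 1 pins down only the skew-adjoint part of $\varphi(iI)$, not $\varphi(iI)$ itself. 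A minor wrinkle is that the pseudospectral identities are only available at the fixed level $\varepsilon$ (or $\varepsilon/2$), so one must rescale the rank-one operator to pass to genuine spectra before invoking Lemma 1.2.
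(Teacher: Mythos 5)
Your proof is correct and its architecture is the same as the paper's: normalise $\varphi$ at $I$ and $iI$, collapse the trilinear relation to the bilinear $\diamond$-relation by freezing the third slot at $iI$, invoke Theorem \ref{t1} on $B_{s}(H)$, and finish with the rank-one/Lemma 1.2 trace argument. One genuine difference is worth recording, and it is in your favour: you pin down $\varphi(I)=cI$ with $c=1$ exactly (via $\sigma_{\varepsilon/4}(iI)=c^{-1}\sigma_{\varepsilon|c|/4}(\varphi(iI))$, forcing $c^{3}=1$), whereas the paper only establishes that $\varphi(I)=cI$ is a self-adjoint central element with $c^{2}\varphi(iI)=iI$ and then renormalises by $\psi(T)=\varphi(I)^{2}\varphi(T)$, asserting without verification that $\psi$ still satisfies the trilinear identity. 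That assertion actually requires $c^{6}=1$, since $\Phi(\lambda A,\lambda B,\lambda C)=\lambda^{3}\Phi(A,B,C)$ for real $\lambda$; your extra computation closes this small gap in the published argument. One minor imprecision on your side: the transpose $T\mapsto T^{t}$ does not obviously preserve the full trilinear relation, because $(X\circ_{\ast}C)^{t}=\overline{C}X^{t}-X^{t}C^{t}$ while $X^{t}\circ_{\ast}C^{t}=X^{t}\overline{C}-C^{t}X^{t}$, and these need not have equal pseudospectra; it does, however, preserve the collapsed identity $\sigma_{\varepsilon/2}(T_{1}\diamond T_{2})=\sigma_{\varepsilon/2}(\varphi(T_{1})\diamond\varphi(T_{2}))$, which is the only thing your Step 3 uses, so the argument is unaffected — you should just state the reduction in terms of that bilinear identity rather than claiming equivariance of $\Phi$ itself.
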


\begin{proof}
The proof breaks down into six claims.\\

{\bf Claim 1.}  $\varphi(I)^{\ast}=\varphi(I)\in Z(B(H))$.\\

Since $\varphi$ is surjective, there exist $S\in B(H)$ such that $\varphi(S)=I$. For every $T\in B(H)$, we have
\begin{align*}
D(0, \varepsilon)&= \sigma_{\varepsilon}(T\diamond S\circ_{\ast} I)=\sigma_{\varepsilon}(\varphi(T)\diamond \varphi(S)\circ_{\ast} \varphi(I))\nonumber\\
&=\sigma_{\varepsilon}(2\varphi(T)\varphi(I)^{\ast}-2\varphi(I)\varphi(T)).
\end{align*}
Let $T=S$, by Lemma 1.1 we can conclude that $\varphi(I)^{\ast}=\varphi(I)$. Since $\varphi$ is surjective, we
have $\varphi(I)\in Z(B(H))$.\\

{\bf Claim 2.}  $\varphi$ preserves the self-adjoint elements in both
direction.\\
Let $T=T^{\ast}$. We have
\begin{align*}
D(0, \varepsilon)&=\sigma_{\varepsilon}(I\diamond I\circ_{\ast} T)=\sigma_{\varepsilon}(\varphi(I)\diamond \varphi(I)\circ_{\ast} \varphi(T))\nonumber\\
&=\sigma_{\varepsilon}(2\varphi(I)^{2}(\varphi(T)^{\ast}-\varphi(T))).
\end{align*}
This implies that $\varphi(T)=\varphi(T)^{\ast}$. Similarly, if $\varphi(T)=\varphi(T)^{\ast}$, then $T=T^{\ast}$.\\

{\bf Claim 3.}  $\varphi^{2}(I)\varphi(iI)=iI$, that is $\varphi^{2}(I)$ is invertible.\\

We have
\begin{align*}
D(-4i,\varepsilon)&=\sigma_{\varepsilon}(-4iI)=\sigma_{\varepsilon}(I\diamond I\circ_{\ast} iI)=\sigma_{\varepsilon}(\varphi(I)\diamond \varphi(I)\circ_{\ast} \varphi(iI))\nonumber\\
&=\sigma_{\varepsilon}(-4\varphi^{2}(I)\varphi(iI)).
\end{align*}
It follows that, by lemma 1.1 $\varphi^{2}(I)\varphi(iI)=iI$.\\

Now, defining a map $\psi:B(H) \rightarrow B(H)$ by $\psi(T)=\varphi^{2}(I)\varphi(T)$ for all $T \in B(H)$. It is easy to see that $\psi$ is a
bijection with $\psi(iI)= iI$, and satisfies $\sigma_{\varepsilon}(T_{1}\diamond T_{2}\circ_{\ast} T_{3})=\sigma_{\varepsilon}(\psi(T_{1})\diamond \psi(T_{2})\circ_{\ast} \psi(T_{3}))$ for all $T_{1}, T_{2}, T_{3}\in B(H)$. Furthermore, for every $T,S\in B(H)$, we have
\begin{align*}
\sigma_{\varepsilon}(-2i(TS^{\ast}+S^{\ast}T))&=\sigma_{\varepsilon}(T\diamond S\circ_{\ast} iI)=\sigma_{\varepsilon}(\psi(T)\diamond \psi(S)\circ_{\ast} \psi(iI))\nonumber\\
&=\sigma_{\varepsilon}(-2i(\psi(T)\psi(S)^{\ast}+\psi(S)^{\ast}\psi(T))).
\end{align*}
It follows that, $\sigma_{\frac{\varepsilon}{2}}(TS^{\ast}+S^{\ast}T)=\sigma_{\frac{\varepsilon}{2}}(\psi(T)\psi(S)^{\ast}+\psi(S)^{\ast}\psi(T))$ for every $T,S\in B(H)$.\\

{\bf Claim 5.} There exists a unitary operator $U$ on $H$ such that $\psi(T)=\lambda UTU^{\ast}$ or $\psi(T)=\lambda UT^{t}U^{\ast}$ for every $T\in B_{s}(H)$, where $\lambda \in \{-1,1\}$.\\

It is clear that $\psi$ preserves the self-adjoint elements in both direction, so $\psi|_{B_{S}(H)}:B_{S}(H) \rightarrow B_{S}(H)$ is a bijective map which satisfies
$\sigma_{\frac{\varepsilon}{2}}(TS+ST)=\sigma_{\frac{\varepsilon}{2}}(\psi(T)\psi(S)+\psi(S)\psi(T))$ for every $T,S\in B_{s}(H)$. So, by Theorem 1.4, there exists a unitary operator $U$ on $H$ such that $\psi(T)=\lambda UTU^{\ast}$ or $\psi(T)=\lambda UT^{t}U^{\ast}$ for every $T\in B_{s}(H)$, where $\lambda \in \{-1,1\}$.\\

Since the maps $T \rightarrow T^{t}$ and $T \rightarrow U^{\ast}TU$ preserve the pseudo spectrum of $TS^{\ast}+S^{\ast}T$,
we may as well assume that $\psi(T)=T$ for every $T\in B_{s}(H)$.\\

{\bf Claim 6.} $\psi(T)=T$ for every $T\in B(H)$.\\

Let $T\in B(H)$ be arbitrary. For any vector $x\in H$ and $\alpha > 0$, we have
\begin{align*}
\alpha\sigma_{\frac{\delta}{\alpha}}(T(x\otimes x)+(x\otimes x)T)&=\sigma_{\delta}(T(\alpha x\otimes x)+(\alpha x\otimes x)T)\nonumber\\
&=\sigma_{\delta}(\psi(T) \psi(\alpha x\otimes x)+\psi(\alpha x\otimes x)\psi(T))\nonumber\\
&=\sigma_{\delta}(\psi(T)(\alpha x\otimes x)+(\alpha x\otimes x)\psi(T))\nonumber\\
&=\alpha\sigma_{\frac{\delta}{\alpha}}(\psi(T)(x\otimes x)+(x\otimes x)\psi(T)),
\end{align*}
where $\delta=\frac{\varepsilon}{2}$. On the other hand
\begin{align*}
\sigma(T(x\otimes x)+(x\otimes x) T)&=\bigcap_{\alpha>0} \sigma_{\frac{\delta}{\alpha}}(T(x\otimes x)+(x\otimes x) T)\nonumber\\
&=\bigcap_{\alpha>0}\sigma_{\frac{\delta}{\alpha}}(\psi(T)(x\otimes x)+(x\otimes x)\psi(T))\nonumber\\
&=\sigma(\psi(T)(x\otimes x)+(x\otimes x)\psi(T)).
\end{align*}
Thus $\sigma(T(x\otimes x)+(x\otimes x) T)=\sigma(\psi(T)(x\otimes x)+(x\otimes x)\psi(T))$.
By the same argument of proof Claim 7 in Theorem 2.1, we conclude that $ \left \langle Tx,x \right \rangle= \left \langle \psi(T)x,x \right \rangle$ for any nonzero vector $x\in H$. As a result, $\psi(T)=T$, and therefore $\varphi(T)=\lambda(\varphi^{2}(I))^{-1}UTU^{\ast}$ or $\varphi(T)=\lambda(\varphi^{2}(I))^{-1}UT^{t}U^{\ast}$ for every $T \in B(H)$. The proof is complete.
\end{proof}


\begin{thebibliography}{99}
\bibitem{abdel} Z. Abidine Abdelali and H. Nkhaylia, \textit{Maps preserving the pseudo spectrum of skew
triple product of operators}, Linear and Multilinear Algebra, \textbf{67}(11) (2019), 2297--2306.
\bibitem{al} E. Alzedani and M. Mabrouk, \textit{Maps preserving the spectrum of skew lie oroduct of operators},
 Kragujevac Journal of Mathematics, \textbf{64}(4) (2022), 525--532.
 \bibitem{Aup} B. Aupetit, \textit{Spectrum-preserving linear mappings between Banach algebras or Jordan–Banach algebras}, J. London
Math. Soc. \textbf{62} (2000) 917--924.
\bibitem{Bai} Z. Bai, S. Du, \textit{Maps preserving products $XY+Y X^{\ast}$ on von Neumann algebras}, J. Math. Anal. Appl. \textbf{386} (2012), 103--109.
\bibitem{Ben1} M. Bendaoud, A. Benyouness and M. Sarih, \textit{Preservers of pseudo spectral
radius of operator products}, Linear Algebra Appl. \textbf{489} (2016), 186--198.
\bibitem{Ben2} M. Bendaoud, A. Benyouness and M. Sarih, \textit{Nonlinear maps preserving
the pseudo spectral radius of skew semi-triple products of operators}, Acta Sci.
Math. (Szeged), \textbf{84} (2018), 39--47.
\bibitem{Ben3} M. Bendaoud, A. Benyouness and M. Sarih, \textit{Preservers of pseudo spectra
of operator Jordan triple products}, Oper. Matrices.
 \textbf{1} (2016), 45--56.
 \bibitem{Bourhim2} A. Bourhim, J. Mashreghi, \textit{A survey on preservers of spectra and local
spectra}, Contemp Math. \textbf{45}, 45--98 (2015).
\bibitem{cu3} J. Cui, V. Forstall, C.K. Li, V. Yannello, \textit{Properties and preservers of the pseudospectrum}, Linear
Algebra Appl. \textbf{436} (2012), 316--325.
\bibitem{cu2} J. Cui, C.K. Li, Y.T. Poon, \textit{Pseudospectra of special operators and pseudospectrum preservers}, J. Math. Anal. Appl. \textbf{419} (2014), 1261--1273.
\bibitem{die} J. Dieudonné, \textit{Sur une généralisation du groupe orthogonal a quatre variables}, Arch. Math. \textbf{1} (1994), 282--287.
\bibitem{fro} G. Frobenius, \textit{Ueber die Darstellung der endlichen Gruppen durch lineare Substitutionen}, Berl Ber. Appl. \textbf{203} (1897), 994--1015.
\bibitem{gli} A.M. Gleason, \textit{A characterization of maximal ideals}. J. Analyse Math. \textbf{19} (1967) 171--172.
\bibitem{jaf} A.A. Jafarian, \textit{A survey of invertibility and Spectrum-preserving linear maps}, Bulletin of the Iranian Mathematical Society. \textbf{35}(2) (2009), 1--10.
\bibitem{jaf2} A.A. Jafarian, A.R. Sourour, \textit{Spectrum-preserving linear maps}, J. Funct. Anal. \textbf{66} (1986) 255--261.
\bibitem{kah} J.P. Kahane, W. Zelazko, \textit{A characterization of maximal ideals in commutative Banach algebras}. Studia Math. \textbf{29} (1968) 339--343.
\bibitem{kap} I. Kaplansky, Algebraic and analytic aspects of operator algebras, Conference Board of the
Mathematical Sciences Regional Conference Series in Mathematics, No. 1. Providence (RI):
American Mathematical Society; 1970.
\bibitem{kum} G.K. Kumar, S.H. Kulkarni, \textit{Linear maps preserving pseudospectrum and condition spectrum},  Banach J. Math. Anal. \textbf{6} (2012), 45--60.
\bibitem{li} C.K. Li, N.C Sze, \textit{Product of operators and numerical range preserving maps},  Studia Math. \textbf{174} (2006), 169--182.
\bibitem{mar} M. Marcus, B.N. Moyls, \textit{Linear transformations on algebras of matrices}, Canad.
J. Math. \textbf{11} (1959) 61-66.
\bibitem{parvin} R. Parvinianzadeh, J. Pazhman, \textit{A Collection of local spectra preserving maps}, Math.
anal. convex optim. \textbf{3}(1) (2022) 49--60.
\bibitem{sem} P. Šemrl, \textit{Maps on idempotent operators}, Studia Math. \textbf{169} (2005) 21--44.
\bibitem{Sourour} A.R. Sourour, \textit{Invertibility preserving linear maps on L(X)}, Trans.
Amer. Math. Soc. \textbf{348} (1996) 13--30.
\bibitem{tre} L.N. Trefethen and M. Embree, \textit{Spectra and pseudospectra}, Princeton Univ. Press, Princeton, NJ, 2005.
\bibitem{zel} W. Zelazko, \textit{A characterization of multiplicative linear functionals in complex Banach algebras},
Studia Math. \textbf{30} (1968) 83--85.
\end{thebibliography}
\end{document}